    \newcommand{\Iwf}{\mathcal{I}}
    \newcommand{\Mwf}{\mathcal{M}}
    \newcommand{\Nwf}{\mathcal{N}}
     \newcommand{\pw}{\mathrm{pow}}
    \newcommand{\bfrak}{\mathfrak{b}}
    \newcommand{\cfrak}{\mathfrak{c}}
    \newcommand{\dfrak}{\mathfrak{d}}
    \newcommand{\menos}{\smallsetminus}
    \newcommand{\add}{\mbox{\rm add}}
    \newcommand{\cov}{\mbox{\rm cov}}
    \newcommand{\non}{\mbox{\rm non}}
    \newcommand{\cof}{\mbox{\rm cof}}
    \newcommand{\Por}{\mathds{P}}
    \newcommand{\Qor}{\mathds{Q}}
    \newcommand{\Fn}{\mathrm{Fn}}
    \newcommand{\supcof}{\mathrm{supcof}}
    \newcommand{\minadd}{\mathrm{minadd}}
    \newcommand{\hgt}{\mathrm{ht}}
\title[Yorioka's characterization of the cofinality of the strong measure zero]{Yorioka's characterization of the cofinality of the strong measure zero ideal and its independency from the continuum}
\author{Miguel A. Cardona}
\address{Institute of Discrete Mathematics and Geometry, TU Wien, Wiedner Hauptstrasse 8--10/104 A--1040 Wien, Austria.}
\email{miguel.montoya@tuwien.ac.at}
\urladdr{https://www.researchgate.net/profile/Miguel\_Cardona\_Montoya}
\thanks{The author was partially supported by the Austrian Science Fund (FWF) P30666}
\subjclass[2010]{03E17, 03E35, 03E40.}
\keywords{Strong measure zero sets, cardinal invariants, Yorioka ideals.}
\begin{document}

\makeatletter
\def\@roman#1{\romannumeral #1}
\makeatother

\theoremstyle{plain}
  \newtheorem{theorem}{Theorem}[section]
  \newtheorem{corollary}[theorem]{Corollary}
  \newtheorem{lemma}[theorem]{Lemma}
  \newtheorem{prop}[theorem]{Proposition}
  \newtheorem{claim}[theorem]{Claim}
  \newtheorem{Fact}[theorem]{Fact}
  \newtheorem{exer}[theorem]{Exercise}
\theoremstyle{definition}
  \newtheorem{definition}[theorem]{Definition}
  \newtheorem{example}[theorem]{Example}
  \newtheorem{remark}[theorem]{Remark}
  \newtheorem{context}[theorem]{Context}
  \newtheorem{question}[theorem]{Question}
  \newtheorem{problem}[theorem]{Problem}
  \newtheorem{notation}[theorem]{Notation}
  \newtheorem*{theorem-non}{Theorem A}  
  \newtheorem*{main-theorem}{Main Theorem}

 \newcommand{\SNcal}{\mathcal{SN}}

  \newcommand{\azul}[1]{{\color{blue}#1}}
\newcommand{\rojo}[1]{{\color{red}#1}}
\newcommand{\tachar}[1]{{\color{red}\sout{#1}}}
\definecolor{amber}{rgb}{1.0,0.49,0.0}

\definecolor{ogreen}{RGB}{107,142,35}

\newcommand{\verde}[1]{{\color{ogreen}#1}}
\newcommand{\amber}[1]{{\color{amber}#1}}

\maketitle

\newcommand{\la}{\langle}
\newcommand{\ra}{\rangle}
\newcommand{\id}{\mathrm{id}}
\newcommand{\sig}{\boldsymbol{\Sigma}}
\newcommand{\cosig}{\boldsymbol{\Pi}}

\begin{abstract}
   In this paper we present a simpler proof of the fact that no inequality between $\cof(\SNcal)$ and $\cfrak$ can be decided in ZFC by using well-known tecniques and results.

\end{abstract}

\section{Introduction}\label{SecIntro}
Borel \cite{Borel} introduced the class of Lebesgue measure zero subsets of the real line 
called \textit{strong measure zero} sets, which we denote by $\SNcal$. The cardinal invariants associated with strong measure zero have been investigated. To summarize some of the results:

\begin{theorem-non}\label{theoremA} The following holds in ZFC
\begin{enumerate}
    \item[(i)] (Carlson \cite{Carlson}) $\add(\Nwf)\leq\add(\SNcal)$,
    \item[(ii)] $\cov(\Nwf)\leq\cov(\SNcal)\leq\cfrak$,
    \item[(iii)] (Miller \cite{Miller}) $\cov(\Mwf)\leq \non(\SNcal)\leq\non(\Nwf)$ and  $\add(\Mwf)=\min\{\bfrak, \non(\SNcal)\}$,
    \item[(iv)] (Osuga \cite{Osuga}) $\cof(\SNcal)\leq2^\dfrak$.
\end{enumerate}
Moreover, each of the following staments is consistent with ZFC
\begin{enumerate}
    \item[(v)] (Goldstern, Judah and Shelah \cite{GJS}) $\cof(\Mwf)<\add(\SNcal)$,
    \item[(vi)] (Pawlikowski \cite{P90}) $\cov(\SNcal)<\add(\Mwf)$,
    \item[(vii)]  $\cfrak<\cof(\SNcal)$ (from CH),
    \item[(viii)] (Yorioka \cite{Yorioka}) $\cof(\SNcal)<\cfrak$,
    \item[(ix)] (Laver \cite{Laver}) $\cof(\SNcal)=\cfrak$ (a consequence of Borel's conjecture).
\end{enumerate}
\end{theorem-non}

To prove (vii) and (viii) Yorioka gave a characterization of $\SNcal$, and to do this he introduced the $\sigma$-ideals $\Iwf_f$ parametrized by increasing functions $f\in\omega^\omega$, which we call \emph{Yorioka ideals} (see Definition \ref{DefYorioId}). These ideals are subideals of the null ideal $\Nwf$ and they include $\SNcal$ and $\SNcal=\bigcap\{\Iwf_f:f\in\omega^\omega\textrm{\ increasing}\}$. Even more, he proved that $\cof(\SNcal)=\dfrak_\kappa$ (see Definition \ref{dnkappa}) whenever $\add(\Iwf_f)=\cof(\Iwf_f)=\kappa$ for all increasing $f$. Although Yorioka's original result futher $\dfrak=\cov(\Mwf)=\kappa$, this can be ommited because $\add(\Nwf)\leq\minadd\leq\add(\Mwf)$ and $\cof(\Mwf)\leq\supcof\leq\cof(\Nwf)$ (see \cite{Osuga,CM}). 

In this work, we provide a simpler proof of the consistency of (viii), which also applies for the consistency of (vii) and (ix).

\begin{main-theorem}[{Yorioka \cite{Yorioka}}]
Let $\kappa$, $\nu$ be infinite cardinals such that $\aleph_1\leq\kappa=\kappa^{<\kappa}<\nu=\nu^\kappa$ and assume that $\lambda$ is a cardinal such that $\kappa\leq\lambda=\lambda^{\aleph_0}$. Then there is some poset $\Qor$ such that $\Vdash_{\Qor}$ $\add(\Nwf)=\add(\SNcal)=\cov(\SNcal)=\non(\SNcal)=\cof(\Nwf)=\kappa$, $\cof(\SNcal)=\dfrak_\kappa=\nu$ and $\cfrak=\lambda$. 
\end{main-theorem}

We also show that this $\Qor$ forces $\add(\SNcal)=\cov(\SNcal)=\non(\SNcal)=\kappa$.

\section{Proof the main theorem}

We first start with basic definitions and facts. Let $\kappa$ be an infinite cardinal. Let $f, g\in\kappa^\kappa$. Set $f\le^* g$ if $\exists\alpha<\kappa\forall\beta>\alpha(f(\beta)\leq g(\beta))$. Denote $\pw_k:\omega\to\omega$ the function defined by $\pw_k(i):=i^k$, and define the relation $\ll$ on $\omega^\omega$ as follows:
$f\ll g \text{\ iff } \forall{k<\omega}(f\circ\pw_k\leq^* g)$.

\begin{definition}\label{DefYorioId} For $\sigma \in (2^{<\omega})^\omega$ define
\[[\sigma]_\infty:=\{x \in 2^\omega:\exists^{\infty}{n < \omega}^{}(\sigma(n) \subseteq x)\}=\bigcap_{n<\omega} \bigcup_{m \geqslant n}[\sigma(m)]\]

and $\hgt_{\sigma}\in\omega^{\omega}$ by $\hgt_{\sigma}(i):=|\sigma(i)|$ for each $i<\omega$. Let $f\in\omega^\omega$ be a increasing function, set
\[\Iwf_{f}:=\{X\subseteq 2^{\omega}:\exists{\sigma \in (2^{<\omega})^{\omega}}(X \subseteq [\sigma]_\infty\text{\ and }h_{\sigma}\gg f )\}.\]
\end{definition}
Any family of the form $\Iwf_f$ with $f$ increasing is called a \textit{Yorioka ideal}, since Yorioka \cite{Yorioka} has proved that $\Iwf_f$ is a $\sigma$-ideal in this case, and $\mathcal{SN}=\bigcap\{\Iwf_f:f\,\textrm{increasing}\}$.
Denote 
\[\minadd=\min\{\add(\mathcal{I}_{f}):f\,\,\textrm{increasing}\},\,\,\, \supcof=\sup\{\cof(\mathcal{I}_{f}):f\,\,\textrm{increasing}\}\]

\begin{definition}\label{dnkappa}
Let $\kappa$ be a regular cardinals. Define the cardinal numbers $\bfrak_\kappa$ and $\dfrak_\kappa$ as follows:
\[\bfrak_\kappa=\min\{|F|:F\subseteq \kappa^\kappa\;\&\;\forall g\in \kappa^\kappa\exists
f\in F(f\not\le^* g)\}\,\,\textit{the (un)bounding number of}\,\,\kappa^\kappa\]  and  
\[\dfrak_\kappa=\min\{|D|:D\subseteq \kappa^\kappa\;\&\;\forall g\in \kappa^\kappa\exists
f\in D(g\le^* f)\}\,\,\textit{the dominating number of}\,\, \kappa^\kappa\] 

In particular, when $\kappa=\omega$, $\bfrak_\kappa$ and $\dfrak_\kappa$ are $\bfrak$ and $\dfrak$ respectively, well known as the \textit{(un)bounding number} and \textit{the dominating number}.
\end{definition}



Set $\Fn_{<\kappa}(I,J):=\{p\subseteq I\times J:|p|<\kappa\,\,\textrm{and}\,\,p\, \textrm{ function}\}$ for sets $I$, $J$ and an infinite cardinal $\kappa$.

\begin{lemma}\label{cohenk}
Let $\nu, \kappa$ be infinite cardinals such that $\kappa^{<\kappa}=\kappa$ and $\nu>\kappa$. Then $\Fn_{<\kappa}(\nu\times\kappa,\kappa)\Vdash\dfrak_\kappa\geq\nu$.
\end{lemma}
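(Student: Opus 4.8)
The plan is to treat $\Por:=\Fn_{<\kappa}(\nu\times\kappa,\kappa)$ as a $<\kappa$-support product of $\nu$ copies of the $\kappa$-Cohen forcing $\Fn_{<\kappa}(\kappa,\kappa)$ and to show that each generic function it adds is $\le^*$-unbounded over any submodel built from $<\nu$ of the coordinates, so that no family of size $<\nu$ can dominate. First I would record the combinatorics of $\Por$. Since $\kappa^{<\kappa}=\kappa$ forces $\kappa$ to be regular, the union of a $<\kappa$-decreasing chain of conditions is again a condition, so $\Por$ is $<\kappa$-closed; and a $\Delta$-system argument using $\kappa^{<\kappa}=\kappa$ (the root carries only $\le\kappa$ possible conditions) shows $\Por$ is $\kappa^+$-cc. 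Hence $\Por$ preserves cofinalities and cardinals, $\kappa$ stays regular, and $\nu$ stays a cardinal above $\kappa$, so $\kappa^\kappa$ and $\le^*$ behave as intended. The crucial consequence of the $\kappa^+$-cc is a support bound: for a nice $\Por$-name $\dot f$ for an element of $\kappa^\kappa$, each of the $\kappa$ maximal antichains deciding a value $\dot f(\beta)$ has size $\le\kappa$, so only $\le\kappa$ conditions occur in $\dot f$ and its support $S_{\dot f}\subseteq\nu$ (the first coordinates appearing in the domains of those conditions) has size $\le\kappa$; moreover $\dot f$ may be read as a name in $\Fn_{<\kappa}(S_{\dot f}\times\kappa,\kappa)$, so $\dot f[G]\in V[G\frestr(S_{\dot f}\times\kappa)]$.

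Next comes the genericity step. For $\xi<\nu$ let $c_\xi\in\kappa^\kappa$ be the generic function $c_\xi(\beta):=G(\xi,\beta)$. The key claim is that if $g\in\kappa^\kappa$ lies in $V[G\frestr(S\times\kappa)]$ for some $S\subseteq\nu$ with $\xi\notin S$, then $c_\xi\not\le^* g$. To see this I would factor $\Por\cong\Fn_{<\kappa}(S\times\kappa,\kappa)\times\Fn_{<\kappa}((\nu\menos S)\times\kappa,\kappa)$, so that $c_\xi$ is added by a forcing generic over a model containing $g$, and observe that for every $\alpha<\kappa$ the set of conditions $q$ in the $\xi$-th coordinate copy $\Fn_{<\kappa}(\kappa,\kappa)$ having some $\beta>\alpha$ in $\dom q$ with $q(\beta)>g(\beta)$ is dense: any condition has bounded domain (as $\kappa$ is regular), so one may pick $\beta$ above both $\alpha$ and the domain and set the value above $g(\beta)$. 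Meeting these dense sets for all $\alpha<\kappa$ makes $\{\beta<\kappa:c_\xi(\beta)>g(\beta)\}$ cofinal in $\kappa$, that is, $c_\xi\not\le^* g$.

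Finally I would combine the two ingredients. Suppose towards a contradiction that some condition forces $\dot D$ to be a dominating family in $\kappa^\kappa$ of size $\mu<\nu$; using the $\kappa^+$-cc together with a mixing argument from $\Vdash|\dot D|\le\mu$, fix ground-model nice names $\dot f_i$ ($i<\mu$) with $\Vdash\dot D=\{\dot f_i:i<\mu\}$. Each $S_{\dot f_i}$ has size $\le\kappa$, so $S:=\bigcup_{i<\mu}S_{\dot f_i}$ has size $\le\mu\cdot\kappa=\max\{\mu,\kappa\}<\nu$, and therefore some $\xi<\nu$ lies outside $S$. Each $f_i=\dot f_i[G]$ belongs to $V[G\frestr(S\times\kappa)]$, so by the genericity step $c_\xi\not\le^* f_i$ for every $i<\mu$. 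Since $c_\xi\in\kappa^\kappa$ in the extension, $\dot D$ fails to dominate $c_\xi$, contradicting the choice of $\dot D$. Hence $\Por\Vdash\dfrak_\kappa\ge\nu$.

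I expect the main obstacle to be the bookkeeping in the support bound rather than the genericity (which is a routine density computation): one must select the names $\dot f_i$ uniformly in the ground model via mixing, verify carefully that the $\kappa^+$-cc forces every antichain, and hence every support $S_{\dot f_i}$, to have size $\le\kappa$, and then confirm the cardinal-arithmetic inequality $\mu\cdot\kappa<\nu$ that yields a free coordinate $\xi$.
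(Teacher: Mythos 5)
Your proposal is correct and follows essentially the same route as the paper: use the $\kappa^+$-cc (which holds since $\kappa^{<\kappa}=\kappa$) to confine the $<\nu$ many names for functions in $\kappa^\kappa$ to a subproduct $\Fn_{<\kappa}(S\times\kappa,\kappa)$ with $|S|<\nu$, and then show by the same density computation that the $\kappa$-Cohen generic at a coordinate $\xi\in\nu\menos S$ is $\le^*$-unbounded over that submodel. Your extra bookkeeping (nice names, mixing, the $\Delta$-system verification of the chain condition) just makes explicit what the paper leaves implicit, so there is nothing to fix.
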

\begin{proof}
 Let $\vartheta<\nu$ and let $\{\dot{x}_\alpha:\alpha<\vartheta\}$ be a set of $\Fn_{<\kappa}(\nu\times\kappa,\kappa)$-names of functions in $\kappa^\kappa$. Since $\Fn_{<\kappa}(\nu\times\kappa,\kappa)$ is $(\kappa^{<\kappa})^+=\kappa^+$-cc we can find a subset $S$ of $\nu$ of size $<\nu$ such that $\dot{x}_\alpha$ is a $\Fn(S\times\kappa,\kappa)$-name for each $\alpha<\vartheta$.
 \begin{claim}\label{ccohenk}
 $\Fn_{<\kappa}(\kappa,\kappa)$ adds an unbounded function in $\kappa^\kappa$ over the ground model.
 \end{claim}
 \begin{proof}
Let $G$ be a $\Fn_{<\kappa}(\kappa,\kappa)$-generic set over $V$. Let $c:=c_G=\bigcup G\in\kappa^\kappa$ be the generic real added by $\Fn_{<\kappa}(\kappa,\kappa)$. Assume that $f\in\kappa^\kappa\cap V$. We will prove that $f\not\leq^* c$. To see this, for $\alpha<\kappa$, define the sets $D_\alpha:=\{p\in\Fn_{<\kappa}(\kappa,\kappa):\exists \beta> \alpha(p(\beta)> f(\beta))\}$ which is dense, so $G$ intersects all of these yielding $\forall \alpha<\kappa\exists \beta>\alpha(c(\beta)>f(\beta))$.
 \end{proof}
 By Claim \ref{ccohenk}, $\Fn_{<\kappa}(\nu\times\kappa,\kappa)$ forces that the $\kappa$-Cohen real at some $\xi\in\nu\menos S$ is not dominated by any $\dot{x}_\alpha$.
\end{proof}

As mentioned in the introduction,  $\add(\Nwf)\leq\minadd\leq\add(\Mwf)$ and $\cof(\Mwf)\leq\supcof\leq\cof(\Nwf)$, so we can reformulate Yorioka’s characterization of $\cof(\SNcal)$ as follows. 

\begin{theorem}[{Yorioka \cite{Yorioka}}]\label{Ylemma}
Let $\kappa$ be a regular uncountable cardinal. Assume that $\kappa=\minadd=\supcof$. Then $\cof(\SNcal)=\dfrak_\kappa$.
\end{theorem}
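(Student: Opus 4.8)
The plan is to collapse the problem to a decreasing $\kappa$-chain of Yorioka ideals and then read off $\cof(\SNcal)$ as the dominating number of $\kappa^\kappa$. First I would record what the hypothesis buys. Since $\minadd\le\add(\Iwf_f)\le\cof(\Iwf_f)\le\supcof$ for every increasing $f$, the assumption $\kappa=\minadd=\supcof$ forces $\add(\Iwf_f)=\cof(\Iwf_f)=\kappa$ for all increasing $f$; combining the inequalities quoted in the introduction with the Cichoń diagram, $\kappa=\minadd\le\add(\Mwf)\le\bfrak\le\dfrak\le\cof(\Mwf)\le\supcof=\kappa$, so $\bfrak=\dfrak=\kappa$ as well. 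Next I would check that the $\ll$-dominating number of the increasing functions equals $\dfrak$: if $D$ is $\le^*$-dominating, then for any $g$ the single function $i\mapsto\max_{k\le i}g(i^k)$ (recall $\pw_k(i)=i^k$) is dominated by some $f\in D$, and this yields $g\ll f$. Hence there is a $\ll$-increasing, $\ll$-cofinal sequence $\langle f_\alpha:\alpha<\kappa\rangle$. Because $f\ll g$ implies $\Iwf_g\subseteq\Iwf_f$ (by transitivity of $\ll$), cofinality gives $\SNcal=\bigcap_{\alpha<\kappa}\Iwf_{f_\alpha}$ with the ideals $\subseteq$-decreasing, and in each $\Iwf_{f_\alpha}$ I would fix a $\subseteq$-increasing cofinal family $\langle C^\alpha_\xi:\xi<\kappa\rangle$, increasing being free from $\add(\Iwf_{f_\alpha})=\kappa$.

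With this scaffolding, attach to each $X\in\SNcal$ the monotone function $g_X\in\kappa^\kappa$, $g_X(\alpha):=\min\{\xi<\kappa:X\subseteq C^\alpha_\xi\}$, well defined since $X\in\Iwf_{f_\alpha}$. For the upper bound I would push forward a $\le^*$-dominating family. Given $g\in\kappa^\kappa$ set $A_g:=\bigcap_{\alpha<\kappa}C^\alpha_{g(\alpha)}$; then $A_g\subseteq C^\beta_{g(\beta)}\in\Iwf_{f_\beta}$ for every $\beta$, so $A_g\in\SNcal$, and $g_X\le g$ implies $X\subseteq A_g$. Taking a $\le^*$-dominating $E\subseteq\kappa^\kappa$ of size $\dfrak_\kappa$ and closing it under the $\kappa^{<\kappa}=\kappa$ many modifications on initial segments (to absorb the $<\kappa$ coordinates below the threshold where $g_X\le^* g$ first fails) produces a family cofinal in $\SNcal$ of size $\kappa\cdot\dfrak_\kappa=\dfrak_\kappa$. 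This gives $\cof(\SNcal)\le\dfrak_\kappa$.

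For the reverse inequality I would argue $(\kappa^\kappa,\le^*)\le_T(\SNcal,\subseteq)$ (Tukey below). Suppose $\{Y_i:i<\theta\}$ is cofinal in $\SNcal$ with $\theta<\dfrak_\kappa$. The functions $g_{Y_i}$ do not dominate $\kappa^\kappa$, so fix $h$ with $h\not\le^* g_{Y_i}$ for all $i$; that is, each $S_i:=\{\alpha:g_{Y_i}(\alpha)<h(\alpha)\}$ is cofinal in $\kappa$. It then suffices to build one $X\in\SNcal$ that escapes level $h(\alpha)$ (i.e. $g_X(\alpha)\ge h(\alpha)$) on a set of coordinates meeting every $S_i$: at such an $\alpha\in S_i$ we get $X\not\subseteq C^\alpha_{g_{Y_i}(\alpha)}\supseteq Y_i$, hence $X\not\subseteq Y_i$, contradicting cofinality. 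The raw material is available coordinatewise, since $\add(\Iwf_{f_\alpha})=\kappa$ makes $\bigcup_{\xi<h(\alpha)}C^\alpha_\xi\in\Iwf_{f_\alpha}$ co-large, so escaping witnesses exist at every coordinate.

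The genuine difficulty --- the step where Yorioka's analysis of the representation $[\sigma]_\infty$ seems unavoidable --- is amalgamating these coordinatewise witnesses into a \emph{single} set that is still strong measure zero. A set of coordinates meeting all the cofinal $S_i$ may have to be all of $\kappa$, so $X$ has size $\kappa$; but under the hypotheses $\non(\SNcal)\ge\kappa$, and a careless size-$\kappa$ choice of escaping points will typically fail to be SMZ. The witnesses must instead be chosen coherently inside a common height-controlled family, so that the assembled $X$ sits in some $[\sigma]_\infty$ with $\hgt_\sigma\gg f_\beta$ for cofinally many $\beta$ and hence lies in every $\Iwf_{f_\beta}$. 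Concretely I would build $X$ by a $\kappa$-stage recursion, at stage $\alpha$ placing the escaping witness for coordinate $\alpha$ inside the tail of a fixed $\sigma$ whose heights dominate $f_\alpha$, with bookkeeping that keeps $X\in\Iwf_{f_\beta}$ for all $\beta$. I expect this amalgamation to be the crux. An alternative, more structural route that packages both bounds at once is to present $\SNcal$ as the directed limit $\limdir$ of the system $\langle(\Iwf_{f_\alpha},\subseteq):\alpha<\kappa\rangle$ and invoke the general Tukey computation for such limits, giving $(\SNcal,\subseteq)\equiv_T(\kappa^\kappa,\le^*)$ and thus simultaneously $\cof(\SNcal)=\dfrak_\kappa$ and $\add(\SNcal)=\bfrak_\kappa$; the hard step above is exactly the verification that this limit is Tukey-equivalent to $(\kappa^\kappa,\le^*)$ and not to something smaller.
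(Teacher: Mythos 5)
The first thing to say is that the paper itself contains no proof of Theorem~\ref{Ylemma}: it is quoted from Yorioka's paper and used as a black box (the paper's contribution is the forcing construction around it). So your proposal can only be measured against Yorioka's argument, and there your scaffolding is the right one and mostly correct: from $\minadd=\supcof=\kappa$ you correctly extract $\add(\Iwf_f)=\cof(\Iwf_f)=\kappa$ for all increasing $f$ and $\bfrak=\dfrak=\kappa$; the max-trick $i\mapsto\max_{k\le i}g(i^k)$ correctly shows that a $\le^*$-dominating family is $\ll$-dominating; this yields a $\ll$-increasing $\ll$-cofinal sequence $\langle f_\alpha:\alpha<\kappa\rangle$ with $\SNcal=\bigcap_{\alpha<\kappa}\Iwf_{f_\alpha}$, and the connection maps $X\mapsto g_X$, $g\mapsto A_g$ give the upper bound $\cof(\SNcal)\le\dfrak_\kappa$. (Two small repairs there: your closure "under the $\kappa^{<\kappa}=\kappa$ many modifications on initial segments" invokes $\kappa^{<\kappa}=\kappa$, which is a hypothesis of the Main Theorem but \emph{not} of Theorem~\ref{Ylemma}; instead make the dominating functions monotone and unbounded and close under the $\kappa$ shifts $f(\gamma+\cdot)$. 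And the "set of coordinates meeting every $S_i$" maneuver is moot, since a set meeting every cofinal subset of $\kappa$ is co-bounded anyway; just demand $g_X\ge h$ everywhere.)

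The genuine gap is exactly the step you yourself label the crux and leave as a one-sentence expectation. For the lower bound $\dfrak_\kappa\le\cof(\SNcal)$ everything reduces to: given $h\in\kappa^\kappa$, produce a \emph{single} $X\in\SNcal$ with $X\not\subseteq\bigcup_{\xi<h(\alpha)}C^\alpha_\xi$ (a set in $\Iwf_{f_\alpha}$, by $\add(\Iwf_{f_\alpha})=\kappa$) for every $\alpha<\kappa$. Choosing escaping points $x_\alpha$ coordinatewise yields a set of size $\kappa$, and there is no soft reason it is strong measure zero: under the hypotheses one only gets $\non(\SNcal)\ge\kappa$, and indeed in the paper's own model $\non(\SNcal)=\kappa$ exactly, so size-$\kappa$ non-SMZ sets abound and the coherence of the choice carries all the content. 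Your sketch of the recursion is moreover internally off as first stated: a single fixed $\sigma$ cannot serve, since membership in every $\Iwf_{f_\beta}$ requires for each $\beta$ a cover $\sigma_\beta$ of all of $X$ with $\hgt_{\sigma_\beta}\gg f_\beta$, while no single height function $\ll$-dominates the $\ll$-cofinal family $\{f_\beta:\beta<\kappa\}$; so one must maintain $\kappa$ many covers (or cofinally many, which suffices since the ideals decrease) simultaneously while escaping at every coordinate. This amalgamation is precisely where Yorioka's combinatorial analysis of the $[\sigma]_\infty$ representations enters, and the proposal does not supply it. The alternative route via $\limdir$ does not bypass it either: a $\subseteq$-decreasing $\kappa$-chain of ideals each Tukey-equivalent to $(\kappa,\le)$ can perfectly well have intersection Tukey-equivalent to $(\kappa,\le)$ (e.g.\ an eventually constant chain), so the claimed Tukey equivalence of the limit with $(\kappa^\kappa,\le^*)$ is not a "general computation" but is equivalent to the escaping-set lemma above. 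As it stands, then, the proposal is a correct reduction plus a correctly identified but unproved key lemma, not a proof.
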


To prove our Main Thereom we need to preserve $\dfrak_\kappa$ for $\kappa$ regular. The following result show one condition under which it can be preserved. 

\begin{lemma}\label{fact}
Let $\kappa$ be a regular uncountable cardinal. Suppose that $\Por$ is a $\kappa$-cc poset. Then $\Vdash_{\Por}\dfrak_\kappa^V=\dfrak_\kappa$.
\end{lemma}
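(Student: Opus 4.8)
The plan is to prove the two ordinal inequalities $\Vdash_{\Por}\dfrak_\kappa\leq\dfrak_\kappa^V$ and $\Vdash_{\Por}\dfrak_\kappa\geq\dfrak_\kappa^V$ separately, both reduced to a single \emph{bounding lemma}: every $f\in\kappa^\kappa\cap V[G]$ is dominated by some $g\in\kappa^\kappa\cap V$ (indeed $f\leq g$ everywhere). First I would note that since $\Por$ is $\kappa$-cc it preserves cofinalities $\geq\kappa$, so $\kappa$ stays regular in $V[G]$ and $\dfrak_\kappa$ remains meaningful there by Definition \ref{dnkappa}.

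For the bounding lemma, let $\dot f$ be a name and $q\in\Por$ with $q\Vdash\dot f\in\kappa^\kappa$. For each $\xi<\kappa$ pick a maximal antichain $A_\xi$ below $q$ whose members decide $\dot f(\xi)$; by $\kappa$-cc we have $|A_\xi|<\kappa$, and since $\kappa$ is regular in $V$ the ordinal $g(\xi):=\sup\{v<\kappa:\exists r\in A_\xi\,(r\Vdash\dot f(\xi)=\check v)\}$ is again below $\kappa$. Thus $g\in\kappa^\kappa\cap V$, and every generic filter containing $q$ meets each $A_\xi$, so $q\Vdash\dot f\leq\check g$.

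The inequality $\dfrak_\kappa\leq\dfrak_\kappa^V$ is then immediate: if $D\in V$ is dominating with $|D|^V=\dfrak_\kappa^V$, then for $f\in\kappa^\kappa\cap V[G]$ the bounding lemma gives $g\in\kappa^\kappa\cap V$ with $f\leq g$, and some $d\in D$ with $g\leq^* d$, whence $f\leq^* d$; so $D$ is still dominating in $V[G]$ and $\dfrak_\kappa^{V[G]}\leq|D|^{V[G]}\leq|D|^V=\dfrak_\kappa^V$. For the reverse inequality I would argue by contradiction: if some condition $q$ forced $\dot E$ to be dominating with $|\dot E|=\check\mu$ for an ordinal $\mu<\dfrak_\kappa^V$ (absoluteness of the ordinal order lets me take $\mu\in V$), fix a name for a surjection $\check\mu\to\dot E$ and let $\dot e_\alpha$ (for $\alpha<\mu$) name its values. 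Applying the bounding lemma below $q$ to each $\dot e_\alpha$ yields $g_\alpha\in\kappa^\kappa\cap V$ with $q\Vdash\dot e_\alpha\leq\check g_\alpha$. Since $|\mu|^V<\dfrak_\kappa^V$, the family $\{g_\alpha:\alpha<\mu\}$ is not dominating in $V$, so I can choose $h\in\kappa^\kappa\cap V$ with $h\not\leq^* g_\alpha$ for all $\alpha$; then $q\Vdash\check h\not\leq^*\dot e_\alpha$ for every $\alpha$, so $\check h$ is dominated by no member of $\dot E$, contradicting that $q$ forces $\dot E$ dominating.

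I expect the bounding lemma to be the main point; once it is available both inequalities are routine. The two places demanding care are the use of regularity of $\kappa$ in $V$ to keep each supremum $g(\xi)$ strictly below $\kappa$, and, in the lower bound, the bookkeeping ensuring that the index $\mu$ is a genuine $V$-ordinal lying below the $V$-cardinal $\dfrak_\kappa^V$, so that the witnessing family $\{g_\alpha:\alpha<\mu\}$ really lives in $V$ and has $V$-cardinality $<\dfrak_\kappa^V$.
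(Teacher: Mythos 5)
Your proof is correct, and it rests on the same pivot as the paper: reduce everything to the fact that a $\kappa$-cc poset is $\kappa^\kappa$-bounding, i.e.\ every $f\in\kappa^\kappa\cap V[G]$ is (everywhere) below some ground-model function. The execution differs in two ways, both of which cut in your favor. First, for the bounding lemma the paper argues by contradiction: fixing $\alpha$, it assumes that for every $\beta<\kappa$ some $p_\beta$ forces $\beta\leq\dot{x}(\alpha)$, proves an auxiliary claim (Claim \ref{claim}: under $\kappa$-cc some condition forces $\kappa$ many of the $p_\beta$ into the generic filter), and derives a contradiction; you compute directly instead --- a maximal antichain $A_\xi$ below $q$ deciding $\dot{f}(\xi)$ has size $<\kappa$ by $\kappa$-cc, so regularity of $\kappa$ keeps $g(\xi)=\sup\{v:\exists r\in A_\xi\,(r\Vdash\dot{f}(\xi)=\check{v})\}$ below $\kappa$ --- which is shorter and more standard, and yields the same conclusion $q\Vdash\dot{f}\leq\check{g}$. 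Second, the paper merely asserts (in a footnote) that $\kappa^\kappa$-bounding posets preserve $\dfrak_\kappa$ and proves neither inequality; you prove both, and in the direction $\Vdash_{\Por}\dfrak_\kappa\geq\dfrak_\kappa^V$ you correctly handle the one genuinely delicate point: a family of ground-model bounds for a dominating family chosen inside $V[G]$ need not itself lie in $V$, so one must work with names $\dot{e}_\alpha$ ($\alpha<\mu$) below a fixed condition $q$ and build $\{g_\alpha:\alpha<\mu\}$ in $V$, where $|\mu|^V<\dfrak_\kappa^V$ guarantees it is not dominating there; your surjection-and-names bookkeeping does exactly this, and the absoluteness of ``$h\not\leq^* g_\alpha$'' between $V$ and $V[G]$ (with $\kappa$ still regular by $\kappa$-cc) finishes the contradiction. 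So your write-up is not only correct but fills in a preservation step the paper leaves implicit.
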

\begin{proof}
It is enough to show that $\Por$ is $\kappa^\kappa$-bounding\footnote{A poset $\Por$ is \emph{ $\kappa^\kappa$-bounding} if for any $p\in\Por$ and any $\Por$-name $\dot{x}$ of a member for  $\kappa^\kappa$, there are a function $z\in\kappa^\kappa$ and some $q\leq p$ that forces $\dot{x}(\alpha)\leq z(\alpha)$ for any $\alpha<\kappa$.} because  $\kappa^\kappa$-bounding posets preserve $\dfrak_\kappa$. Let $\dot{x}$ be a $\Por$-name for a member of $\kappa^\kappa$. We prove that $\forall\alpha<\kappa\exists z(\alpha)<\kappa(\Vdash_{\Por}\dot{x}(\alpha)<z(\alpha))$. Fix any $\alpha<\kappa$. Towards a contradiction, assume that $\forall\beta<\kappa\exists p_\beta\in\Por(p_\beta\Vdash_{\Por}\beta\leq\dot{x}(\alpha))$.
\begin{claim}\label{claim}
Assume that $\Por$ is $\kappa$-cc and $\{p_\alpha:\alpha<\kappa\}\subseteq\Por$. Then there is a $q\in\Por$ such that  $q\Vdash|\{\alpha<\kappa:p_\alpha\in\dot{G}\}|=\kappa$.
\end{claim}
\begin{proof}
To argue by contradiction assume that $\Vdash_{\Por}|\{\alpha<\kappa:p_\alpha\in\dot{G}\}|<\kappa$. Let $\dot{\beta}$ be a $\Por$-name such that $\Vdash\dot{\beta}\in\kappa$ and $\{\alpha<\kappa:p_\alpha\in\dot{G}\}\subseteq \dot{\beta}$. Fix a maximal antichain $A$ deciding $\dot{\beta}$ and a function $h:A\to\kappa$ such that $p\Vdash h(p)=\dot{\beta}$ for all $p\in A$. Set $\gamma:=\sup_{p\in A}h(p)<\kappa$. Since $\kappa$ is regular and $\Por$ is $\kappa$-cc, $\gamma<\kappa$, so $\Vdash_{\Por}\{\alpha<\kappa:p_\alpha\in\dot{G}\}\subseteq \gamma$. But $p_{\gamma+1}\Vdash\gamma+1\in\{\alpha<\kappa:p_\alpha\in\dot{G}\}\subseteq \gamma$, which is a contradiction.
\end{proof}

By Claim \ref{claim}, we can find a condition $q\in\Por$ such that $q\Vdash|\{\beta<\kappa:p_\beta\in\dot{G}\}|=\kappa$, so there are a $r\leq q$ and $\vartheta<\kappa$ such that $r\Vdash \dot{x}(\alpha)=\vartheta$, even more, we can find $s\leq r$ and $\varepsilon>\vartheta$ such that $s\Vdash p_\varepsilon\in\dot{G}$. Hence $s\Vdash \dot{x}(\alpha)=\vartheta<\varepsilon\leq\dot{x}(\alpha)$ because $p_{\varepsilon}\Vdash \varepsilon\leq \dot{x}(\alpha)$, which is a contradiction. 

Set $z\in\kappa^\kappa$ such that $\Vdash_{\Por}\dot{x}(\alpha)<z(\alpha)$ for any $\alpha<\kappa$. This $z$ works.
\end{proof}

Now we are ready to prove the Main Theorem.

\begin{proof}[Proof of the Main Theorem]
In $V$, we start with $\Por_0:=\Fn_{<\kappa}(\nu\times\kappa,\kappa)$. Note that $\Por_0$ is $\kappa^+$-cc and $<\kappa$-closed. Then $\Vdash_{\Por_0}\dfrak_\kappa=2^\kappa=\nu$ by Lemma \ref{cohenk}.

In $V^{\Por_0}$, let $\Por_1$ be the  FS iteration of amoeba forcing of length $\lambda\kappa$. Then, $\Vdash_{\Por_1}\add(\Nwf)=\cof(\Nwf)=\kappa$ and $\cfrak=\lambda$. In particular, $\Por_1$ $\add(\SNcal)=\non(\SNcal)=\kappa$ and $\minadd=\supcof=\kappa$. On the other hand, $\Por_1$ $\cov(\SNcal)=\kappa$ because the length of the FS iteration has cofinality $\kappa$ (see e.g. \cite[Lemma 8.2.6]{BJ}). Therefore, $\Vdash_{\Por_1}\add(\SNcal)=\cov(\SNcal)=\non(\SNcal)=\kappa$ and $\cof(\SNcal)=\dfrak_\kappa=\nu$ by Theorem \ref{Ylemma} and Lemma \ref{fact}.  
\end{proof}

\subsection{Acknowledgments} The author would like to thank my PhD advisor Diego A. Mej\'ia for the very useful discussions that helped this work to take its final form.

\section{Open problems} 
Quite recently, the author with Mej\'ia and Rivera-Madrid \cite{CMR} constructed a poset forcing 
$\non(\SNcal)<\cov(\SNcal)<\cof(\SNcal)$. This is first result where 3 cardianl invariants associated with $\SNcal$ are pairwise different, but its still unknown for 4, so we ask:
\begin{question}\label{Qadd}
   Is it consistent with ZFC that $\add(\SNcal)<\non(\SNcal)<\cov(\SNcal)<\cof(\SNcal)$?
\end{question}

In a work in progress, the author with Mej\'ia and Yorioka have improved methods and results known from \cite{Yorioka} to prove the consistency of $\cov(\SNcal)<\non(\SNcal)<\cof(\SNcal)$. However its still unknown the following problem.

\begin{question}\label{Qadd}
   Is it consistent with ZFC that $\add(\SNcal)<\cov(\SNcal)<\non(\SNcal)<\cof(\SNcal)$?
\end{question}
The method of $\kappa$-\textit{uf-extendable matrix iterations}, recently introduced by the author with Brendle and Mej\'ia \cite{BCM}, could be useful to answer the question above. For example they constructed a ccc poset forcing
\[\add(\Nwf)=\add(\Mwf)<\cov(\Nwf)=\non(\Mwf)<\cov(\Mwf)=\non(\Nwf)<\cof(\Mwf)=\cof(\Nwf).\]
In the same model, $\cov(\SNcal)=\cov(\Nwf)<\non(\SNcal)=\non(\Nwf)$ by Theorem A and because this model is obtained by a FS iteration of length with cofinality $\nu$ (where $\nu$ is the desired value for $\non(\Mwf)$), and it is well known that such cofinality becomes an upper bound of $\cov(\SNcal)$ (see e.g. \cite[Lemma 8.2.6]{BJ}). But it is unknown how to deal with $\add(\SNcal)$ and $\cof(\SNcal)$ in this context.

\bibliography{main}
\bibliographystyle{bbl}


\end{document}